\newtheorem*{main-theorem}{Main Theorem}
\newtheorem{proposition}{Proposition}[section]
\newtheorem{theorem}{Theorem}
\newtheorem{lemma}[proposition]{Lemma}
\theoremstyle{definition}
\newtheorem{remark}[proposition]{Remark}
\numberwithin{equation}{section}
\def\NN{{\mathbb N}}
\def\11{\mathbf{1}}
\def\reals{{\mathbb R}}
\def\cx{{\mathbb C}}
\def\Ci{{\mathcal C}^\infty}
\def\Re{\,\mathrm{Re}\,}
\def\Im{\,\mathrm{Im}\,}
\def\supp{\mathrm{supp}\,}
\def\id{\,\mathrm{id}\,}
\def\O{{\mathcal O}}
\def\SS{{\mathbb S}}
\def\s{{\mathcal S}}
\def\phi{\varphi}
\def\half{{\frac{1}{2}}}
\def\dist{\text{dist}\,}
\def\be{\begin{eqnarray*}}
\def\ee{\end{eqnarray*}}
\def\ben{\begin{eqnarray}}
\def\een{\end{eqnarray}}
\def\lll{\left\langle}
\def\rrr{\right\rangle}
\def\L2R{L_{\text{Rest}}^2}
\def\tchi{\tilde{\chi}}
\def\L2c{L^2_{\text{comp}}}
\def\tA{\tilde{A}}
\def\Dom{\text{Dom}\,}
\begin{document}
\title[Resolvents and the Wave Equation]{Applications of Cutoff
  Resolvent Estimates to the Wave Equation}
\author{Hans Christianson}
\address{Department of Mathematics, MIT, 77 Massachusetts Ave.,
  Cambridge, MA USA}
\email{hans@math.mit.edu}

\begin{abstract}
We consider solutions to the linear wave equation on
non-compact Riemannian manifolds without boundary when the geodesic
flow admits a filamentary hyperbolic trapped set.  We obtain a
polynomial rate of local energy decay with exponent
depending only on the dimension.
\end{abstract}
\maketitle


\section{Introduction}
In this paper we consider solutions to the linear wave equation on the
non-compact Riemannian manifolds with trapping studied by Nonnenmacher-Zworski
\cite{NZ}.  
Let $(X,g)$ be a Riemannian manifold of odd dimension $n\geq 3$ without
boundary, with
(non-negative) 
Laplace-Beltrami operator $- \Delta$ acting on functions.  The Laplace-Beltrami
operator is an unbounded, essentially self-adjoint operator on
$L^2(X)$ with domain $H^2(X)$.  We assume $(X,g)$ is asymptotically
Euclidean in the sense of \cite[(3.7)-(3.9)]{NZ}.  That is there
exists $R_0>0$ sufficiently large that, on
each infinite branch of $M \setminus B(0,R_0)$, the semiclassical
Laplacian $-h^2 \Delta$ takes the form
\be
-h^2 \Delta|_{M \setminus B(0,R_0)} = \sum_{|\alpha| \leq 2}
a_\alpha(x,h) (hD_x)^\alpha,
\ee
with $a_\alpha(x,h)$ independent of $h$ for $|\alpha|=2$, 
\be
&& \sum_{|\alpha| = 2}
 a_\alpha(x,h) (hD_x)^\alpha \geq C^{-1} |\xi|^2, \,\,\, 0 < C <
\infty, \text{ and}\\
&& \sum_{|\alpha| \leq 2}
a_\alpha(x,h) (hD_x)^\alpha \to |\xi|^2, \,\,\, \text{as } |x| \to
\infty \text{ uniformly in } h.
\ee
In order to quote the results of \cite{NZ} we also need the following
analyticity assumption: $\exists \theta_0 \in [0, \pi)$ such that the
$a_\alpha(x,h)$ are extend holomorphically to
\be
\{ r \omega : \omega \in \cx^n, \,\, \dist ( \omega, \SS^n) <
\epsilon, \,\, r \in \cx, \,\, |r| \geq R_0, \,\, \arg r \in
[-\epsilon, \theta_0+ \epsilon) \}.
\ee
As in \cite{NZ}, the analyticity assumption immediately implies 
\be
\partial_x^\beta \left( \sum_{|\alpha| \leq 2} a_\alpha(x, h)
  \xi^\alpha - |\xi|^2 \right)  = o ( |x|^{-|\beta|} )\lll \xi \rrr^2,
\,\, |x| \to \infty.
\ee


We assume also that the classical resolvent $(-\Delta - \lambda^2)^{-1}$ has a holomorphic
continuation to a neighbourhood of $\lambda \in \reals$ as a bounded operator
$L^2_{\text{comp}} \to L^2_{\text{loc}}$.  

We consider solutions $u$ to the following wave equation on $X \times \reals_t$.
\ben
\label{wave-eq-101}
\left\{ \begin{array}{l} (-D_t^2 - \Delta  ) u(x,t) = 0, \,\,\, (x,t) \in X
    \times [0,\infty) \\
u(x,0) = u_0 \in H^1(X) \cap \Ci_c(X), \\
 D_t u(x,0) = u_1 \in
L^2(X) \cap \Ci_c(X), \end{array}
\right.
\een

For $u$ satisfying
\eqref{wave-eq-101} and $\chi \in \Ci_c(X)$, we define the {\it local energy}, $E_{\chi}(t)$, to be
\be
E_{\chi}(t) = \half \left( \left\| \chi \partial_t u
\right\|_{L^2(X)}^2 + \left\| \chi u \right\|_{H^{1}(X)}^2 \right)   .
\ee 
Local energy for solutions to the wave equation has been well studied in various settings.  Morawetz
\cite{Mor}, Morawetz-Phillips \cite{MoPh}, and
Morawetz-Ralston-Strauss \cite{MRS} study the wave equation in
non-trapping exterior domains in $\reals^n$, showing the local energy
decays exponentially in odd dimensions $n \geq 3$, and polynomially in
even dimensions.  This has been
generalized to cases with non-trapping potentials \cite{Vai} and
compact non-trapping perturbations of Euclidean space \cite{Vod}.  In
the case of elliptic trapped rays, it is known that (see \cite{Ral})
exponential decay of the local energy is generally not possible.
Ikawa \cite{I1,I2} shows in dimension $3$ there is exponential local energy decay with
a loss in derivatives in the presence of trapped rays between convex
obstacles, provided the obstacles are sufficiently small and far apart.  In the case $X$ is Euclidean outside a compact set,
$\partial X \neq \emptyset$, and with no assumptions on trapping, Burq shows in \cite{Bur1a} that $E_\chi(t)$ decays at least
logarithmically with some loss in derivatives.  The author
shows in \cite{Ch3} that if there is one
hyperbolic trapped orbit with no other trapping, then the local energy decays exponentially with a
loss in derivative (including the case $\partial X = \emptyset$).  

The main result of this paper is that if there is
a hyperbolic trapped set which is sufficiently ``thin'', then the
local energy decays at least polynomially, with an exponent depending
on the dimension $n$.  

\begin{theorem}
\label{wave-decay-thm}
Suppose $(X,g)$ satisfies the assumptions of the introduction, $\dim X
= n \geq 3$ is odd, and $(X,g)$ admits a compact hyperbolic fractal trapped set, $K_E$, in the energy level
$E>0$ with topological pressure $P_E(1/2)<0$.  Assume there is no
other trapping and $(- \Delta - \lambda^2)^{-1}$ admits a
holomorphic continuation to a strip around $\reals \subset \cx$.  Then
for each $\epsilon>0$ and $s>0$, there is a constant $C >0$, depending
on $\epsilon$, $s$, and the support of $u_0$ and $u_1$, such that
\ben
\label{loc-en-est-101}
E_{\chi}(t) \leq C \left( \frac{ \log (2 + t) }{t}
\right)^{\frac{2s}{3n + \epsilon}} \left( \|u_0\|_{H^{1+s}(X)}^2 +
  \|u_1\|_{H^s(X)}^2 \right).
\een
\end{theorem}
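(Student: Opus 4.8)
The plan is to reduce the decay of $E_{\chi}(t)$ to a high-frequency estimate for the cutoff resolvent and then to feed in the semiclassical bound furnished by the trapping hypothesis. Writing the solution of \eqref{wave-eq-101} through the spectral theorem,
$$u(\cdot,t) = \cos\big(t\sqrt{-\Delta}\big)u_0 + \frac{\sin\big(t\sqrt{-\Delta}\big)}{\sqrt{-\Delta}}\,u_1,$$
one sees that $2E_{\chi}(t)$ is governed by the two cutoff half-wave propagators $\chi e^{\pm it\sqrt{-\Delta}}\chi$ applied to the compactly supported data $u_0,u_1$. Using Stone's formula, each such propagator can be represented as an oscillatory integral in the spectral parameter whose kernel is built from the boundary values of $\chi(-\Delta-\lambda^2)^{-1}\chi$; the assumed holomorphic continuation of $(-\Delta-\lambda^2)^{-1}$ to a strip around $\reals$ is exactly what is needed to make sense of these boundary values and to permit deforming the contour off the real axis. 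Thus the whole problem is converted into quantitative control of $\chi(-\Delta-\lambda^2)^{-1}\chi$ near, and slightly below, the real axis, with explicit dependence on $|\lambda|$.

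Next I would localize dyadically in frequency, $|\lambda|\sim 2^{k}$, and on each block rescale semiclassically by setting $h\sim 2^{-k}$ and $P=-h^2\Delta$, so that $\lambda$ near $2^{k}$ corresponds to the spectral parameter $z$ of $P$ near the fixed energy $E$. This is where the hypotheses enter. Since $K_{E}$ is a compact hyperbolic trapped set with $P_{E}(1/2)<0$ and there is no other trapping, the Nonnenmacher--Zworski estimate \cite{NZ} applies and supplies a bound of the form $\|\chi(P-z)^{-1}\chi\|_{L^2\to L^2}\leq C h^{-1}\log(1/h)$ for $z$ in a neighbourhood of $E$ with $\Im z\geq -\gamma h$, for some fixed $\gamma>0$ and all small $h$; away from $K_{E}$ the nontrapping hypothesis gives the usual $O(h^{-1})$ bound, and the two are glued in the standard way. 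Undoing the rescaling turns this into a bound for $\chi(-\Delta-\lambda^2)^{-1}\chi$ on the $k$-th dyadic block, with a loss that is only logarithmic in $|\lambda|$.

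With the resolvent bound in hand on each dyadic block, I would extract decay by combining the resolvent-to-energy-decay correspondence (as in \cite{Ch3} and the references there) with interpolation. For data with enough derivatives this correspondence yields a decay estimate for $\chi e^{\pm it\sqrt{-\Delta}}\chi$, but at the cost of a number of derivatives that grows linearly in the dimension $n$: the loss is dimensional because converting the $L^2\to L^2$ operator bound on the resolvent into a bound between the Sobolev spaces appearing in $E_{\chi}$ passes through Sobolev embeddings and elliptic regularity on $X^{n}$. Interpolating this derivative-costly estimate against the conservation of energy (the trivial bound $E_{\chi}(t)\leq E_{\chi}(0)$ for $H^1\times L^2$ data) then yields, for data in $H^{1+s}\times H^{s}$, the polynomial rate: the $H^{s}$ norm controls the frequencies below a time-dependent cutoff $|\lambda|\leq (t/\log t)^{c}$ while the complementary high frequencies are absorbed by the decay estimate, and balancing the two across the dyadic sum produces $\big(\log(2+t)/t\big)^{2s/(3n+\epsilon)}$, the logarithmic factor being inherited directly from the $\log(1/h)$ in the resolvent bound.

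The main obstacle is making the resolvent bound and the ensuing decay estimate uniform across all dyadic frequency blocks and tracking the exact power of $|\lambda|$, since the constant in the Nonnenmacher--Zworski bound, the derivatives lost in the gluing, and the Sobolev steps must all be controlled simultaneously; it is precisely this bookkeeping that fixes the dimensional exponent $3n$ and forces the appearance of the arbitrarily small $\epsilon$. A secondary but genuine issue is the behaviour near the bottom of the spectrum and at the thresholds separating the dyadic blocks, where the continuation is only qualitative and the semiclassical estimate does not apply; there one must fall back on the assumed strip and on low-frequency resolvent bounds to show that this regime contributes a harmless correction. Finally one must verify that the reconstruction of $\chi u(\cdot,t)$ from its dyadic pieces converges in the energy norm, which again uses the $H^{s}$ regularity of the data.
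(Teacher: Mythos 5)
There is a genuine gap, and it sits exactly where the decay exponent comes from. You assert that Nonnenmacher--Zworski gives $\|\chi(P-z)^{-1}\chi\|\leq Ch^{-1}\log(1/h)$ for $\Im z\geq-\gamma h$ with $\gamma>0$ fixed, and you then attribute the exponent $3n$ to derivative losses from Sobolev embedding and elliptic regularity when converting operator bounds into energy bounds. Both points are off. The estimate of \cite{NZ} for the true resolvent (without the complex absorbing potential) is only available on the real axis; extending it below the axis is the main technical work of the paper, carried out via a Phragm\'en--Lindel\"of/three-lines argument (Lemma \ref{holo-lemma-101} and Proposition \ref{res-nbhd}). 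That argument uses the a priori exponential bound $\|(P-iW-z)^{-1}\|\leq Ce^{Ch^{-n-\epsilon}}$, and the width of the region one can reach is $M^{-3/2}$ with $M\sim h^{-n-\epsilon}$; this is precisely where $|\Im\lambda|\lesssim|\Re\lambda|^{-3n/2-\epsilon}$, and hence the exponent $3n$ in \eqref{loc-en-est-101}, comes from. If your claimed bound in an $O(h)$-strip in $z$ (equivalently an $O(1)$-strip in $\lambda$) were actually available, the conclusion would be exponential local energy decay with derivative loss, not the polynomial rate of the theorem --- a sign that the claim cannot be taken off the shelf.

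Relatedly, your passage from the resolvent bound to time decay is underspecified. The hypothesis that $(-\Delta-\lambda^2)^{-1}$ continues to a strip is purely qualitative; without a quantitative bound off the real axis you cannot deform the contour in Stone's formula and extract decay, and a bound on the real axis alone does not yield decay of the propagator. The paper supplies the missing mechanism in two steps: Theorem \ref{res-lam} gives the quantitative bound in the thin complex neighbourhood, and Theorem \ref{burq-thm} (an abstract semigroup theorem adapted from Burq and Lebeau, proved by writing the propagator as a contour integral of the resolvent, splitting frequencies with a Gaussian cutoff at scale $F(t)$, and deforming the low-frequency part into the analyticity region) converts ``polynomial resolvent bound in a neighbourhood of width $P(|\Re\xi|)$'' into the decay rate $F(t)^{-k}$ with $F(t)^{k+1}\leq\exp(tP(F(t)))$. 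Solving this relation with $P(r)=r^{-3n/2-\epsilon/2}$ is what produces $(\log(2+t)/t)^{2/(3n+\epsilon)}$; your final interpolation with conservation of energy to reach general $s$ does match the paper, but the core of the argument --- how the analyticity width becomes a decay rate --- is absent from your proposal.
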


\begin{remark}
It is expected that Theorem \ref{wave-decay-thm} is not optimal, and
in fact an exponential or sub-exponential estimate holds.  Similar to
in \cite{Ch3}, we expect applications to the nonlinear wave equation,
although there are certain technical difficulties to overcome.
\end{remark}

The proof of Theorem \ref{wave-decay-thm} is a consequence of an
adaptation of \cite[Th\'eor\`eme 1]{Bur1a} to this setting and the
following resolvent estimates.

\begin{theorem}
\label{res-lam}
Suppose $(X,g)$ satisfies the assumptions of Theorem
\ref{wave-decay-thm}.  Then for any $\chi \in \Ci_c(X)$ and any $\epsilon>0$ there is a
constant $C = C_{\chi, \epsilon}>0$ such that
\be
\| \chi ( - \Delta - \lambda^2 )^{-1} \chi \|_{L^2(X) \to L^2(X)}
\leq C \frac{ \log( 1 + \lll \lambda \rrr)}{ \lll \lambda \rrr},
\ee
for 
\be
\lambda \in \left\{ \lambda : | \Im \lambda | \leq
  \left\{ \begin{array}{ll} C, & | \Re \lambda | \leq C, \\
      C'| \Re \lambda |^{-3n/2 - \epsilon}, & | \Re \lambda | \geq C 
\end{array} \right. \right\}.
\ee
\end{theorem}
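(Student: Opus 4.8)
The plan is to reduce everything to the semiclassical resolvent bound of \cite{NZ} and then to propagate that bound off the real axis into a two-sided strip, handling the low- and high-frequency ranges of the $\lambda$-region separately. For $|\Re\lambda|\le C$ I would simply invoke the standing hypothesis that $\chi(-\Delta-\lambda^2)^{-1}\chi$ continues holomorphically to a neighbourhood of $\reals$: on the compact set $\{|\Re\lambda|\le C,\ |\Im\lambda|\le C\}$ this operator-valued function is holomorphic, hence uniformly bounded in operator norm, and since $\log(1+\lll\lambda\rrr)/\lll\lambda\rrr$ is bounded below there, the stated inequality holds after enlarging $C$. This disposes of the first regime.

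For $|\Re\lambda|\ge C$ I would pass to the semiclassical picture. Setting $h=\sqrt{E}\,\lll\lambda\rrr^{-1}$ and $z=h^2\lambda^2$, one has $-\Delta-\lambda^2=h^{-2}(P-z)$ with $P=-h^2\Delta$ and $\Re z$ in a fixed neighbourhood of the energy $E$. The pressure hypothesis $P_E(1/2)<0$ together with the geometric assumptions of the introduction is exactly the input required by \cite{NZ}, which yields $\|\chi(P-z)^{-1}\chi\|_{\l\to\l}\le C\log(1/h)/h$ for $z$ near $E$ on and slightly above the real axis. Un-rescaling gives the desired bound on the physical side:
\[ \|\chi(-\Delta-\lambda^2)^{-1}\chi\|_{\l\to\l}=h^2\|\chi(P-z)^{-1}\chi\|_{\l\to\l}\le Ch\log(1/h)=C\frac{\log\lll\lambda\rrr}{\lll\lambda\rrr}. \]
Tracing the change of variables, $\Im z = 2\,\Im\lambda\,/\Re\lambda$, so this already covers $\Im\lambda\ge 0$ and a neighbourhood of the real axis; the remaining task is to push the bound down to $\Im\lambda=-c|\Re\lambda|^{-3n/2-\epsilon}$.

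The substance of the proof is this propagation step, which I expect to be the main obstacle. The scalar functions $\lambda\mapsto\langle\chi(-\Delta-\lambda^2)^{-1}\chi f,g\rangle$ are holomorphic in the resonance-free strip, so $\log\|\chi(-\Delta-\lambda^2)^{-1}\chi\|_{\l\to\l}$ is subharmonic there, and I would run a Phragm\'en--Lindel\"of / three-lines argument comparing the good bound $\log\lll\lambda\rrr/\lll\lambda\rrr$ on the real axis with a crude a~priori bound on a reference line deeper in the strip. That crude bound, together with the polynomial resonance count for this class of asymptotically Euclidean manifolds (obtained via complex scaling in the spirit of Sjöstrand--Zworski), is what controls how far the good estimate survives: the competition between the boundary bound and the exponential-in-$\langle\lambda\rangle$ a~priori bound fixes the admissible width, and it is precisely this bookkeeping that must be carried out to extract the exponent $3n/2+\epsilon$ rather than a larger loss. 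Finally I would patch the two regimes by choosing $C$ so that the low- and high-frequency $\lambda$-regions overlap, yielding the estimate on the full region of the statement.
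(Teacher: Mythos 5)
Your outline reproduces the paper's strategy at the top level---low/high frequency split, semiclassical rescaling, input from \cite{NZ}, and a three-lines argument against an exponential a priori bound to push the estimate below the real axis---but the step you defer as ``bookkeeping'' is the entire content of the theorem (the estimate itself is already in \cite{NZ}; only the region is new), and as sketched the argument is not executable. The core difficulty is that a Phragm\'en--Lindel\"of comparison between the real axis and ``a reference line deeper in the strip'' cannot be run globally: the a priori bound from resonance counting is $e^{C\langle\lambda\rangle^{n+\epsilon}}$ (semiclassically $e^{Ch^{-n-\epsilon}}$), which grows with $\Re\lambda$, so the boundary data are not uniform along the horizontal lines. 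One must first localize in $\Re\lambda$ by multiplying by a holomorphic cutoff, and the only such cutoffs available---Gaussian mollifications of a compactly supported function, as in the paper's Lemma \ref{holo-lemma-101}---remain bounded only in a strip of width $M^{-1/2}$, where $M=Ch^{-n-\epsilon}$ is the logarithm of the a priori bound. This caps the depth of the lower reference line at $\delta_-\sim M^{-1/2}$, and the harmonic-measure balance $\delta_+\sim \delta_-/M$ then produces the width $M^{-3/2}$, i.e.\ the exponent $3n/2+\epsilon$. Taken at face value with $\delta_-\sim 1$, your sketch would yield the strictly better exponent $n+\epsilon$, which the paper does not claim---a sign that the localization cost you have omitted is exactly where the theorem's exponent comes from.

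A second gap: you apply the scheme directly to $\chi(-\Delta-\lambda^2)^{-1}\chi$ and assert the NZ bound ``already covers $\Im\lambda\ge 0$.'' It does not, in the uniform form required: the self-adjoint bound degenerates like $1/|\Im\lambda^2|$ as $\Im\lambda\to 0^+$, and the cutoff estimate of \cite{NZ} is on the real axis, while the region in the statement is two-sided. The paper instead runs the maximum principle on the resolvent with complex absorbing potential, $F(\zeta)=\lll h(P-iW-h\zeta)^{-1}f,g\rrr$, which satisfies the clean upper-half-plane hypothesis $|F|\le\alpha+\gamma/\Im\zeta$ with $\gamma=h^N$ arbitrarily small, and only afterwards converts back to $\chi(P-z)^{-1}\chi$ via the gluing argument of \cite[Theorem 5]{NZ}. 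Without that intermediate object, or some substitute furnishing a quantitative bound throughout the upper half plane, your three-lines argument has no usable upper boundary datum. To repair the proposal you would need to supply both the absorbing-potential reduction and a precise version of Lemma \ref{holo-lemma-101}.
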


\begin{remark}
The proof of Theorem \ref{wave-decay-thm} depends more on
the neighbourhood in which the resolvent estimates hold than on the 
estimates themselves.  Given a complex neghbourhood of the real axis,
any polynomial cutoff resolvent estimate will give the same local energy 
decay rate.  Theorem \ref{res-lam} represents a gain over
the estimates in \cite[Theorem 5]{NZ} in the sense that the estimate
holds in a complex neighbourhood of $\reals$, rather than just on $\reals$.
\end{remark}

{\bf Acknowledgments.} 
This
research was partially conducted during the period the author was
employed by the Clay Mathematics Institute as a Liftoff Fellow.


\section{Proof of Theorem \ref{res-lam}}
To prove Theorem \ref{res-lam}, we use the results of
Nonnenmacher-Zworski \cite{NZ} to prove a high energy estimate for the
resolvent with complex absorbing potential, then
use the holomorphic continuation to bound the cutoff resolvent by a
constant for low energies.  If we consider the problem
\ben
\label{evp}
(- \Delta - \lambda^2)u  = f,
\een 
and restrict our attention to values $|\lambda | \geq C$ for some
constant $C>0$, we can transform equation \eqref{evp} into a
semiclassical problem for fixed energy by setting 
\be
\lambda = \sqrt{z}/h
\ee
for $z \sim 1$ and $0 < h \leq h_0$.  Then \eqref{evp} becomes
\be
(P -z ) u = h^2 f,
\ee
where
\be
P = -h^2 \Delta
\ee
is the semiclassical Laplacian.

The following Proposition is the
high energy resolvent estimate from \cite{NZ} with the
improvement that the estimate holds in a larger neighbourhood of
$\reals \subset \cx$.

\begin{proposition}
\label{res-nbhd}
Suppose $W \in \Ci( X ; [0,1])$, $W \geq 0$ satisfies
\be
\supp W \subset X \setminus B(0,R_1), \,\,\, W \equiv 1 \text{ on } X
\setminus B(0,R_2),
\ee
for $R_2>R_1$ sufficiently large, and 
\be
\| ( P -iW -z )^{-1} \|_{L^2 \to L^2} \leq C_N \left( 1 + \log (1/h) +
  \frac{h^N}{\Im z} \right),
\ee
for $z \in [E-\delta, E + \delta] + i(-ch, ch)$.  Then for each
$\epsilon>0$ and each $\chi \in
\Ci_c(X)$, there is a constant $C = C_{\epsilon, \chi}>0$ such that 
\be
\|\chi (P-z)^{-1} \chi \|_{L^2 \to L^2} \leq C \frac{ \log (1/h) }{h},
\ee
for $z \in [E-c_1h, E+c_1h] + i(-c_2h^{3n/2 +1 + \epsilon}, c_2
h^{3n/2 + 1 +
  \epsilon} )$.
\end{proposition}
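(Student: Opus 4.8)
The plan is to express $(P-z)^{-1}$ through the complex–absorbing resolvent $R_W(z) := (P-iW-z)^{-1}$, which the hypothesis controls, and then to pass from the real axis into the strip by a determinant and a maximum–principle argument. Since the constant may depend on $\chi$, I first enlarge $R_1$ so that $\supp\chi\subset B(0,R_1)$; then $W\equiv 0$ on a neighbourhood of $\supp\chi$ and of the trapped set. From $P-z=(P-iW-z)\bigl(I+iR_W(z)W\bigr)$ one obtains the exact, meromorphically continued identity
\[
\chi(P-z)^{-1}\chi=\chi\bigl(I+iR_W(z)W\bigr)^{-1}R_W(z)\chi .
\]
By hypothesis $\|R_W(z)\|=O(\log(1/h))$ everywhere in the box of interest except in the thin slab $0<\Im z<h^{N}$ (where $h^{N}/\Im z$ may be large); taking $N$ large, the whole problem reduces to controlling $(I+iR_W(z)W)^{-1}$. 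Since $R_W$ is of order $-2$ and $W$ is compactly supported, $iR_WW\in S_p$ for $p>n/2$, so this is in turn equivalent to bounding from below the regularized Fredholm determinant $D(z)=\det_{\lceil n/2\rceil}\bigl(I+iR_W(z)W\bigr)$, whose zeros are exactly the resonances of $P$ in the box.

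On the real axis the hypothesised bound yields, by the limiting–absorption comparison of \cite{NZ} (their Theorem 5),
\[
\|\chi(P-z)^{-1}\chi\|\le C\,\frac{\log(1/h)}{h},\qquad \Im z=0,\ |\Re z-E|\le c_1h .
\]
For the behaviour off the axis I would use the Schatten estimate $\|iR_W(z)W\|_{S_p}^{p}=O(h^{-n})$, which follows from the hypothesis and the semiclassical calculus, to get $|D(z)|\le\exp(Ch^{-n})$ on the box $\mathcal B=[E-c_1h,E+c_1h]+i\,[-c_2h^{3n/2+1+\epsilon},c_0h]$. Because $D(z)\to1$ as $\Im z\to+\infty$ and $R_W$ continues holomorphically below $\reals$, Jensen's formula together with this upper bound controls the number of zeros of $D$ in $\mathcal B$ and produces a lower bound $|D(z)|\ge\exp(-Ch^{-n})$ on the strip $|\Im z|<c_2h^{3n/2+1+\epsilon}$; in particular this strip is resonance–free, and there $\chi(P-z)^{-1}\chi$ is holomorphic with the a priori bound $\exp(Ch^{-n})$.

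It remains to upgrade this exponential a priori bound to the sharp $\log(1/h)/h$ on the strip. For this I would apply a semiclassical maximum principle of Tang--Zworski type to the holomorphic function $\chi(P-z)^{-1}\chi$: it obeys $\log(1/h)/h$ on the real axis (previous step) and on the top edge $\Im z=c_0h$ — where even the trivial self-adjoint bound $\|(P-z)^{-1}\|\le 1/\Im z=O(1/h)$ applies — while satisfying only $\exp(Ch^{-n})$ in between. Trading the exponential loss against these boundary bounds, via the two–constants theorem in a box of real width $\sim h$, forces $\log(1/h)/h$ to persist throughout $|\Im z|<c_2h^{3n/2+1+\epsilon}$. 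The exponent $3n/2+1+\epsilon$ is precisely the width at which the exponential loss $\exp(Ch^{-n})$ is absorbed into a logarithmic one; after the substitution $\lambda=\sqrt z/h$ this is exactly the neighbourhood in Theorem \ref{res-lam}.

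The main obstacle is this complex–analytic endgame: one must simultaneously bound the growth of $D$, count resonances, secure the lower bound $|D|\ge\exp(-Ch^{-n})$ on the strip, and carry out the maximum–principle descent with a loss no worse than $\log(1/h)$, all with constants uniform as $h\to0$ — and it is this balance that pins the dimension–dependent width. By contrast, the algebraic reduction of the first step and the on–axis estimate quoted from \cite{NZ} are comparatively routine.
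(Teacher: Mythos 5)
Your overall strategy --- reduce to the absorbing-potential resolvent $R_W$, establish a resonance-free strip, then run a maximum principle --- is in the right spirit, but the step that is supposed to produce the resonance-free strip is broken, and it is exactly the step the paper's argument is designed to handle. You claim that the upper bound $|D(z)|\le\exp(Ch^{-n})$ together with Jensen's formula ``produces a lower bound $|D(z)|\ge\exp(-Ch^{-n})$ on the strip; in particular this strip is resonance-free.'' Jensen's formula only bounds the \emph{number} of zeros; it cannot exclude zeros from the strip nor give a pointwise lower bound there --- a zero of $D$ could sit anywhere in $\mathcal{B}$ consistently with the upper bound. Since your final two-constants argument requires $\chi(P-z)^{-1}\chi$ to be holomorphic in the strip, the argument is circular: resonance-freeness is assumed exactly where it must be proved. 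In addition, the determinant upper bound below the real axis is not available from the hypotheses: for $\Im z\le 0$ the only a priori control on $R_W$ is the exponential bound $e^{Ch^{-n-\epsilon}}$, so $\|R_W(z)W\|_{S_p}$, and hence $\log|D(z)|$, is exponentially rather than polynomially large there. Finally, your argument never actually uses the $h^N/\Im z$ structure of the hypothesis beyond ``taking $N$ large,'' whereas that structure is the engine of the proof and is what pins down the exponent $3n/2$.

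What the paper does instead is prove a quantitative three-lines lemma (Lemma \ref{holo-lemma-101}): a function holomorphic on a rectangle, bounded by $e^{M}$ there and by $\alpha+\gamma/\Im z$ in the upper half, satisfies $|F|\le C\alpha$ on $[-1/2,1/2]+i(-M^{-3/2},M^{-3/2})$ provided $\gamma\le\epsilon M^{-3/2}$. Applying this to $F(\zeta)=\lll h(P-iW-h\zeta)^{-1}f,g\rrr$ with $M=C_\epsilon h^{-n-\epsilon}$ (the a priori exponential bound, legitimate because $P-iW-z$ is invertible for $\Im z>-ch$, so there are no poles to avoid), $\alpha=c_0+\log(1/h)$ and $\gamma=h^N$ yields in one stroke both the width $M^{-3/2}\sim h^{3(n+\epsilon)/2}$ (whence $3n/2+1+\epsilon$ after rescaling $z=h\zeta$) and the bound $\log(1/h)/h$ for $(P-iW-z)^{-1}$ in that complex neighbourhood --- no determinant, no zero counting, and no separate resonance-free-region step. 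Only afterwards does one pass to $\chi(P-z)^{-1}\chi$ via the gluing argument of \cite[Theorem 5]{NZ}. To salvage your route you would have to replace the Jensen step by precisely such a maximum-principle argument applied to $R_W$ itself (not to $\chi(P-z)^{-1}\chi$), exploiting the $\alpha+\gamma/\Im z$ hypothesis in the upper half-plane together with the exponential bound; at that point the determinant machinery becomes superfluous.
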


We first improve \cite[Lemma 9.2]{NZ} in order to get cutoff resolvent
estimates with the absorbing potential in a polynomial neighbourhood of the real axis.  The proof
of the following lemma is an adaptation of the ``three-lines'' theorem
from complex analysis and borrows techniques from \cite{Ch, BZ, NZ}
and the references cited therein.

\begin{lemma}
\label{holo-lemma-101}
Suppose $F(z)$ is holomorphic on 
\be
\Omega = [-1,1] + i(-c_-, c_+),
\ee
and satisfies
\be
\log |F(z)| & \leq & M, \,\,\, z \in  \Omega, \\
|F(z)| & \leq & \alpha + \frac{ \gamma}{\Im z}, \,\,\, z \in \Omega
\cap \{ \Im z >0 \}.
\ee
Then if $\gamma \leq \epsilon M^{-3/2}$ for $\epsilon>0$ sufficiently
small, there exists a constant $C = C_\epsilon >0$ such that
\be
|F(z)| \leq C \alpha, \,\,\, z \in [-1/2, 1/2] + i(-M^{-3/2},
M^{-3/2}).
\ee
\end{lemma}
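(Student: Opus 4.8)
The plan is to run a two-constants (harmonic measure) argument on the subharmonic function $u = \log|F|$. The idea is to use the refined bound only on a single horizontal line chosen high enough that $\gamma/\Im z$ is negligible there, and then to propagate that control across the entire target rectangle using the crude bound $u \le M$ on the remaining boundary together with a harmonic-measure estimate. This is the modern packaging of the ``three-lines'' mechanism: the good line plays the role of one constant and the far boundary the other.

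First I would set $b = M^{-3/2}$ and record the three inputs. Since $F$ is holomorphic, $u=\log|F|$ is subharmonic on $\Omega$; by hypothesis $u \le M$ throughout $\Omega$; and on the horizontal segment $\Gamma = \{\Im z = b\}\cap\Omega$ the refined bound gives $|F|\le \alpha + \gamma/b \le \alpha+\epsilon$, using $\gamma \le \epsilon M^{-3/2} = \epsilon b$, so that $u \le \log(\alpha+\epsilon) =: m$ on $\Gamma$. I would then apply the two-constants theorem on the sub-rectangle $D = \Omega\cap\{\Im z < b\}$, whose upper edge is the good line $\Gamma$ and whose remaining boundary (the two vertical sides and the bottom edge $\{\Im z = -c_-\}$) carries only the bound $u \le M$. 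For a target point $z_0 = x_0+iy_0$ with $|x_0|\le\half$ and $|y_0| < b$, subharmonicity yields
\[
u(z_0) \;\le\; m\,\omega(z_0,\Gamma,D) + M\bigl(1-\omega(z_0,\Gamma,D)\bigr),
\]
where $\omega(\,\cdot\,,\Gamma,D)$ is the harmonic measure of $\Gamma$ in $D$. Everything then reduces to showing $1-\omega(z_0,\Gamma,D)\lesssim b$: in that case the error term is $M(1-\omega)\lesssim Mb = M^{-1/2} = o(1)$, giving $u(z_0) \le m + o(1)$ and hence $|F(z_0)| \le C(\alpha+\epsilon)\le C'\alpha$ (absorbing $\epsilon$; in the applications $\alpha\ge 1$).

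The main obstacle is this harmonic-measure bound. I would exploit that $z_0$ sits at distance $\delta = b - y_0 \in (0,2b)$ below the good edge $\Gamma$, while its horizontal distance to the corners where $\Gamma$ meets the vertical sides is at least $\half$ and its distance to the bottom edge is of order $c_-$; both of the ``bad'' portions are therefore at distance of order one. Comparing $D$ locally with the half-plane $\{\Im z < b\}$ — equivalently, constructing an explicit harmonic barrier that vanishes on $\Gamma$ and equals $1$ on $\partial D\setminus\Gamma$, dominated by the half-plane harmonic measure $\tfrac{2}{\pi}\arctan(2\delta)$ of the complement of a unit segment — gives $1-\omega(z_0,\Gamma,D)\le C\delta \le 2Cb$, since a Brownian path from $z_0$ must make a horizontal excursion of order one to exit through the bad boundary, an event of Cauchy-tail probability $O(\delta)$. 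This step is exactly what ties the admissible height $b=M^{-3/2}$ to the hypothesis $\gamma \le \epsilon M^{-3/2}$: the same $b$ must be large enough that $\gamma/b\le\epsilon$ on $\Gamma$ yet small enough that $Mb\to 0$.

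Finally, I would observe that the lower half of the target rectangle needs no separate treatment. The box $D$ already extends below the real axis, and the estimate above is uniform over all $|y_0| < b$, regardless of sign, because the refined bound is invoked only to control $u$ on the single line $\Gamma$ and the propagation into $D$ is purely via subharmonicity and harmonic measure. Shrinking the horizontal window from $[-1,1]$ to $[-\half,\half]$ is precisely what keeps $z_0$ at order-one distance from the corners, which is the only place the loss of width is used.
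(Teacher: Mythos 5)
Your proof is correct in outline, but it takes a genuinely different route from the paper. The paper's argument is a weighted maximum-principle (``three lines with multiplier'') argument: it multiplies $F$ by $e^{iaz}\phi(z)$, where $\phi$ is a quasi-analytic cutoff built by convolving $\psi \in \Ci_c([-1,1])$ with a Gaussian of variance $\beta \sim 1/M$, and applies the maximum principle on the asymmetric rectangle $[-1,1]+i(-\delta_-,\delta_+)$ with $\delta_-\sim M^{-1/2}$, $\delta_+\sim M^{-3/2}$, $a\sim -M^{3/2}$; the exponential weight converts the crude bound on the bottom edge into $e^{-M}$, the Gaussian cutoff kills the crude bound on the vertical edges, and one then inverts $e^{iaz}\phi$ on the target set. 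You replace this entire apparatus by the two-constants theorem for the subharmonic function $\log|F|$ on $D=\Omega\cap\{\Im z< M^{-3/2}\}$, with a single good line, reducing everything to the harmonic-measure bound $1-\omega(z_0,\Gamma,D)\leq C\,\dist(z_0,\Gamma)$. That bound is true but is the one step you should write out: the half-plane Poisson estimate $\frac{2}{\pi}\arctan(2\delta)$ controls the vertical sides only after doubling (the half-plane harmonic measure of the complement of the unit segment is merely $\geq 1/2$ on those sides), and the bottom edge is not reached by a horizontal excursion at all --- it needs the separate linear (gambler's-ruin) barrier $(b-\Im z)/(b+c_-)$, which is also $O(\delta)$; the sum of the two barriers then majorizes the bad harmonic measure. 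Note also that, exactly as in the paper's own proof, the conclusion you actually reach is $|F|\leq C(\alpha+\epsilon)$, so passing to $C\alpha$ uses $\alpha\gtrsim 1$, which holds in the application where $\alpha=c_0+\log(1/h)$. What your approach buys is transparency and, in fact, a slightly larger region: your only constraints are $\gamma/b\leq\epsilon$ and $M\cdot\dist(z_0,\Gamma)=O(1)$, so the same argument gives the bound on a strip of width $\sim 1/M$ below the good line, whereas the paper's width $M^{-3/2}$ is forced by the constraint $\delta_-\leq\beta^{1/2}$ coming from the quasi-analytic cutoff; what the paper's explicit multiplier buys is a self-contained computation that avoids quoting harmonic-measure estimates and matches the scheme of \cite{BZ, NZ} used elsewhere in the argument.
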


\begin{proof}
Choose $\psi(x) \in \Ci_c([-1,1])$, $\psi \equiv 1$ on
$[-1/2,1/2]$, and set
\be
\phi(z) = \beta^{-1/2} \int e^{-(x-z-ic \beta)^2/\beta} \psi(x) dx,
\ee
where $0 < \beta <1$ and $c>0$ will be chosen later.  The function
$\phi(z)$ enjoys the following properties:

(a) $\phi(z)$ is holomorphic in $\Omega$,

(b) $|\phi(z)| \leq C$ on $\Omega \cap \{ | \Im z| \leq
\beta^{1/2} \}$,

(c) $|\phi(z)| \geq C^{-1}$ on $\{ | \Re z | \leq 1/2 \} \cap \{ | \Im z | \leq \beta
\}$ if $c>0$ is chosen appropriately,

(d) $|\phi(z)| \leq C e^{-C/\beta}$ for $z \in \{ \pm 1 \} +
i(-\beta^{1/2}, \beta^{1/2})$.

Now for $a \in \reals$ to be determined, set 
\be
g(z) = e^{iaz} \phi(z) F(z).
\ee
For $\delta_\pm >0$ to be determined, let 
\be
\Omega' := \Omega \cap \{ - \delta_- \leq \Im z \leq \delta_+\}.
\ee
We have the following bounds for $g(z)$ on the boundary of $\Omega'$:
\be
\log |g(z) | \leq \left\{ \begin{array}{ll} -C/\beta + M - a \Im z, &
    \Re z = \pm 1, \,\, \text{if } |\Im z | \leq \beta^{1/2}, \\
    C + M + a \delta_-, & \Im z = - \delta_- \geq -\beta^{1/2}, \\
    C + \log ( \alpha + \gamma/\delta_+) - a \delta_+, & \Im z =
    \delta_+ \leq \beta^{1/2}. \end{array} \right.
\ee
We want to choose $a$, $\beta$, and $\delta_\pm$ to optimize these
inequalities.  Choosing $a = -2M / \delta_-$ yields
\be
\log |g(z) | \leq C-M \text{ for } \Im z = - \delta_- ,
\ee
and choosing $\delta_+ = | 2/a|$ yields
\be
\log |g(z) | \leq C + \log ( \alpha + \gamma/\delta_+) + 2, \text{ for
} \Im z =
    \delta_+ .
\ee
Finally, chooing $\beta = C'/M$ for an appropriate $C'>0$ yields
\be
\log |g(z) | \leq -C^{-1} M \text{ for } \Re z = \pm 1, \,\, |\Im z |
\leq \max \{ \delta_+, \delta_- \},
\ee
and taking $\delta_- = C'' M^{-1/2}$, $\delta_+ = C'' M^{-3/2}$ gives
\be
\log |g(z)| \leq C''' + \log ( \alpha + \gamma/\delta_+) \text{ on
} \partial \Omega'.
\ee

In order to conclude the stated inequality on $F(z)$, we need to
invert $e^{-iaz} \phi(z)$, which, from the definition of $a$ and the
properties of $\phi$ stated above, is possible for
\be
z \in [-1/2, 1/2] + i(-M^{-3/2}, M^{-3/2}).
\ee
Then for $z$ in this range and $\gamma$ satisfying $\gamma \leq
\epsilon M^{-3/2}$,
\be
|F(z)| \leq C \alpha (1 + \epsilon) \leq C' \alpha,
\ee
as claimed.
\end{proof}

Now to prove Proposition \ref{res-nbhd}, as in \cite{NZ}, we apply Lemma
\ref{holo-lemma-101} to
\be
F(\zeta) = \lll h (P -iW -h \zeta)^{-1} f, g \rrr_{L^2},
\ee
for $f,g \in L^2$.  For $M$ we use the well-known estimate
\be
\| (P -iW -z)^{-1} \|_{L^2 \to L^2} \leq C_\epsilon e^{C
  h^{-n-\epsilon}}, \,\,\, \Im z \geq -h/C,
\ee
and take $M = C_\epsilon h^{-n-\epsilon}$.  For the other parameters,
we take
\be
 \gamma = h^N, \,\,\,\, \alpha = c_0 + \log(1/h).
\ee
Rescaling, we conclude
\be
\| (P - iW -z )^{-1} \| \leq C \frac{ \log(1/h)}{h}
\ee
in the stated region.  Then we apply the remainder of the proof
\cite[Theorem 5]{NZ}.
\qed


\section{Proof of Theorem \ref{wave-decay-thm}}
In this section we adapt the proof of \cite[Th\'eor\`eme 1]{Bur1a} to
the case where one has better resolvent estimates.  We first present a
general theorem on semigroups (see \cite[Th\'eor\`eme 3]{Bur1a} and \cite{Leb}).

Let $H$ be a Hilbert space, $B(\xi)$ a meromorphic family of unbounded
linear operators on $H$, holomorphic for $\Im \xi <0$.  Assume for
$\Im \xi \leq 0$,
\be
\Im (B(\xi) u,u )_H \geq 0.
\ee
Let $\Dom (B) = \Dom (1-iB(-i))$ denote the domain of $B$.  Assume for $\Im
\xi <0$, $\xi - B(\xi)$ is bijective and bounded with respect to the
natural norm on $\Dom (B)$,
\be
\| u \|^2_{\Dom (B)} = \| u \|^2_H + \| B(-i) u \|^2_H, 
\ee
and
\be
\| ( \xi - B(\xi))^{-1} \|_{H \to H} \leq C | \Im \xi |^{-1}.
\ee
Assume that $B(\xi) \in \s^1 ( \reals^2; \mathcal{L}(\Dom (B), H))$.  That
is, $B(\xi)$ is a symbol with respect to $\xi$ and assume that, as
operators on $\Dom (B)$,
\be
B(D_s) e^{is \xi}  & =& e^{is \xi } B(\xi + D_s) \\
& = & e^{is \xi } B( \xi ),
\ee
since members of $\Dom (B)$ do not depend on $s$.
We assume $B$ satisfies the identity
\be
B( D_t ) \psi(t) U(t) & = & \psi(t) B( \psi'/i \psi + D_t ) U(t)
\\
& = & \psi(t) (B(D_t) + A(t)) U(t),
\ee
for $\psi(t) \in \Ci ( \reals)$, and $U \in \Ci( \reals_t; \Dom (B)
)$.  Here, $A(t)$ is a linear operator, bounded on $H$ and has compact
support contained in $\supp \psi'$.

By the Hille-Yosida Theorem, for every $k \in \NN$ and $s \geq 0$, we
can construct the operators
\be
\frac{e^{i s B(D_s)}}{(1 - iB(-i))^{k}},
\ee
where $e^{i s B(D_s)}$ satisfies the evolution equation
\be
\left\{ \begin{array}{l} (D_s - B(D_s)) e^{i s B(D_s)} = 0, \\ e^{i s
      B(D_s)}|_{s=0} = \id. \end{array} \right.
\ee

Now suppose $\chi_j$, $j=1,2$ are bounded operators $H \to H$, and
$\chi_1 (\xi - B(\xi))^{-1} \chi_2$ continues holomorphically to the
region
\be
\Omega = \left\{ \xi \in \cx : | \Im \xi | \leq
  \left\{ \begin{array}{ll} C, & | \Re \xi | \leq C \\
      P(| \Re \xi | ), & | \Re \xi | \geq C, \end{array}
  \right. \right \},
\ee
where $P( | \Re \xi |) >0$ and is monotone decreasing (or constant) as $| \Re \xi |
\to \infty$.  Assume
\ben
\label{res-est-101}
\| \chi_1 (\xi - B(\xi))^{-1} \chi_2 \|_{H \to H} \leq G( | \Re \xi |)
\een
for $\xi \in \Omega$, where $G(| \Re \xi| ) = \O( | \Re \xi |^N)$ for
some $N \geq 0$.  We further assume that the propagator $e^{isB(D_s)}$
``acts finitely locally,'' in the sense that for $s \in [0,1]$,
\be
\tchi_2 := e^{isB(D_s)} \chi_2
\ee
is also a bounded operator on $H$, and $\chi_1 (\xi - B( \xi ))^{-1}
\tchi_2$ continues holomorphically to $\Omega$ and satisfies the
estimate \eqref{res-est-101} with $G$ replaced by $CG$ for a constant $C>0$.

\begin{theorem}
\label{burq-thm}
Suppose $B( \xi )$ satisfies all the assumptions above, and let $k \in \NN$,
$k > N+1$.  Then for any $F(t)>0$, monotone increasing, 
satisfying
\ben
\label{F-cond}
F(t)^{k+1} \leq \exp (t P(F(t))),
\een
we have
\ben
\label{abs-decay-est}
\left\| \chi_1 \frac{ e^{it B(D_t)}}{(1 - iB(-i))^k} \chi_2
\right\|_{H \to H} \leq C F(t)^{-k}.
\een
\end{theorem}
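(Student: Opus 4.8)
The plan is to represent the smoothed, spatially cut off propagator as an inverse Fourier--Laplace transform of the cutoff resolvent, to push the contour of integration up into the region $\Omega$ where $\chi_1(\xi - B(\xi))^{-1}\chi_2$ continues holomorphically, and then to read off the decay from the factor $|e^{it\xi}| = e^{-t\Im\xi}$ together with the polynomial resolvent bound \eqref{res-est-101}. The balance condition \eqref{F-cond} is precisely what will optimize the resulting integral.

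First I would fix $u \in H$, set $v_0 = (1-iB(-i))^{-k}\chi_2 u$, and study $W(t) = e^{itB(D_t)}v_0$, which solves $(D_t - B(D_t))W = 0$ with $W|_{t=0} = v_0$, so that $\chi_1 U(t)(1-iB(-i))^{-k}\chi_2 u = \chi_1 W(t)$. Extending $W$ by $0$ to $t<0$ and using the dissipativity $\Im(B(\xi)u,u)_H\geq 0$ together with $\|(\xi - B(\xi))^{-1}\|\leq C|\Im\xi|^{-1}$, the transform $\widehat W(\xi) = \int_0^\infty e^{-it\xi}W(t)\,dt$ is holomorphic for $\Im\xi < 0$. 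The symbol identity $B(D_t)e^{it\xi} = e^{it\xi}B(\xi)$ converts the evolution equation into $(\xi - B(\xi))\widehat W(\xi) = \tfrac1i v_0$, whence $\widehat W(\xi) = \tfrac1i(\xi - B(\xi))^{-1}v_0$, and Fourier inversion along $\{\Im\xi = -\epsilon\}$ gives
$$\chi_1\frac{e^{itB(D_t)}}{(1-iB(-i))^k}\chi_2 u = \frac{1}{2\pi i}\int_{\Im\xi = -\epsilon}e^{it\xi}\,\chi_1(\xi - B(\xi))^{-1}(1-iB(-i))^{-k}\chi_2 u\,d\xi.$$
The regularizing factor $(1-iB(-i))^{-k}$ supplies the $|\Re\xi|$-decay of the integrand; since $(1-iB(-i))^{-k}\chi_2$ maps into $\Dom(B^k)$, for $k>N+1$ it dominates the growth $G=\O(|\Re\xi|^N)$ of \eqref{res-est-101} and renders the integral absolutely convergent, which is what legitimizes the contour manipulations below.

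Next I would deform $\{\Im\xi = -\epsilon\}$ up to the boundary of $\Omega$, i.e.\ to $\Im\xi = P(|\Re\xi|)$ for $|\Re\xi|\geq C$ and to a fixed positive height for $|\Re\xi|\leq C$. Holomorphy of $\chi_1(\xi - B(\xi))^{-1}\chi_2$ on $\Omega$ permits crossing the real axis; the ``acts finitely locally'' hypothesis --- that $\tchi_2 = e^{isB(D_s)}\chi_2$ is bounded and $\chi_1(\xi - B(\xi))^{-1}\tchi_2$ still continues to $\Omega$ with estimate \eqref{res-est-101} --- is used to absorb the short-time flow-out of $\chi_2$ and to dispose of the commutator terms produced when the propagator is moved past the spatial cutoffs via $B(D_t)\psi U = \psi(B(D_t)+A)U$, whose error $A(t)$ is supported in $\supp\psi'$. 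With the contour inside $\Omega$, write $\mu = |\Re\xi|$ and split the integral at $\mu = F(t)$.

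The heart of the matter is controlling both frequency ranges by the single quantity $F(t)^{-k}$, and this is driven entirely by \eqref{F-cond}. For $\mu\leq F(t)$ the contour sits at height $\Im\xi = P(\mu)\geq P(F(t))$ (here monotonicity of $P$ enters), so $|e^{it\xi}|\leq e^{-tP(F(t))}$; since $k>N+1$ the factor $\langle\Re\xi\rangle^{N-k}$ from \eqref{res-est-101} and the regularizer is integrable, and this contribution is $\lesssim e^{-tP(F(t))}\leq F(t)^{-(k+1)}\leq F(t)^{-k}$ by \eqref{F-cond}. For $\mu\geq F(t)$ one relies on the polynomial decay of the integrand through the $k>N+1$ gain of the regularizer, supplemented if necessary by an integration by parts in $\xi$ against $e^{it\xi}$ to recover the final power, so that the tail is arranged to contribute at the same order $F(t)^{-k}$. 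The main obstacle I anticipate is therefore twofold: rigorously justifying the deformation --- in particular upgrading \eqref{res-est-101} to the composition with the propagated cutoff $\tchi_2$ and eliminating the terms supported in $\supp\psi'$ --- and carrying out the frequency split so that \emph{both} ranges are genuinely bounded by $F(t)^{-k}$, which is exactly where the precise interplay between $k$, $N$, and the balance law \eqref{F-cond} must be used carefully to obtain \eqref{abs-decay-est}.
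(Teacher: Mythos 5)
Your overall skeleton---represent the regularized propagator as a contour integral of the resolvent (this is the paper's Lemma \ref{prop-lemma}), deform into $\Omega$, and split at frequency $F(t)$---matches the paper's strategy, and your treatment of the low-frequency range $|\Re\xi|\leq F(t)$ is essentially the paper's estimate of the term $J_1$. But there is a genuine gap in the high-frequency range $|\Re\xi|\geq F(t)$. On the deformed contour the integrand is bounded by $\lll \xi\rrr^{-k}G(|\Re\xi|)=\O(\lll\xi\rrr^{N-k})$, so the direct tail estimate gives only $\O(F(t)^{N+1-k})$, which falls short of the claimed $F(t)^{-k}$ by the factor $F(t)^{N+1}$. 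The fix you propose---integration by parts in $\xi$ against $e^{it\xi}$---does not close this: each integration by parts gains a factor $1/t$ but requires a bound on $\partial_\xi\bigl(\chi_1(\xi-B(\xi))^{-1}\chi_2\bigr)$, which is not among the hypotheses; Cauchy estimates on a disk of radius comparable to $P(|\Re\xi|)$ inside $\Omega$ would cost a factor $1/P(|\Re\xi|)$ per derivative, and since \eqref{F-cond} only guarantees $tP(F(t))\geq (k+1)\log F(t)$, the net gain per integration by parts is $O(1/\log F(t))$, never a power of $F(t)^{-1}$. Moreover, on the upper boundary of $\Omega$ the factor $|e^{it\xi}|=e^{-tP(|\Re\xi|)}$ supplies no decay at high frequencies because $P$ is decreasing.

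The paper's way around this is the essential idea your proposal is missing: after localizing in time with a cutoff $\psi$ and Duhamel (so the source is supported in $s\in[1/3,2/3]$), it convolves with a Gaussian in an auxiliary variable $\lambda$ and splits at $|\lambda|\gtrless F(t)$ rather than splitting the $\xi$-contour directly. The high-frequency piece is then regarded as the solution of an inhomogeneous evolution equation whose forcing term $K(s)$ \emph{no longer contains the resolvent at all} (applying $D_\tau-B(D_\tau)$ cancels the factor $(\xi-B(\xi))^{-1}$), and $\int_0^1\|K(s)\|\,ds$ is estimated by Plancherel in time; this replaces the lossy $L^1_\xi$ bound $\int \lll\xi\rrr^{N-k}d\xi$ by a sup bound $\sup_{|\xi|\geq F(t)^2/A}\lll\xi\rrr^{-k}$ times an $L^2$ norm, yielding the full power $F(t)^{-2k}$ (before the relabelling $F^2\mapsto F$). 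A secondary issue: your contour representation places $(1-iB(-i))^{-k}$ between the resolvent and $\chi_2$, but the continuation hypothesis applies only to $\chi_1(\xi-B(\xi))^{-1}\chi_2$ with the cutoff adjacent to the resolvent, and $(1-iB(-i))^{-k}\chi_2$ is not a bounded cutoff of the required form; the paper handles this ordering by commuting through $\chi_2$ at the cost of a fattened cutoff $\tchi_2$ and the ``acts finitely locally'' hypothesis, and your argument needs the same care before the deformation across the real axis is legitimate.
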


As in \cite{Bur1a}, Theorem \ref{wave-decay-thm} follows from Theorem
\ref{burq-thm} by setting
\be
B = \left( \begin{array}{cc} 0 & -i \id \\ -i \Delta & 0 \end{array}
\right),
\ee
the Hilbert space $H = H^1(X) \times L^2(X)$, and $\chi_j \in \Ci_c(X)$
for $j = 1,2$.  The commutator $[\chi_2, B]$ is compactly
supported and bounded on $H$, so if $\tchi_2\in \Ci_c(X)$ is supported on a slightly larger
set than $\chi_2$, we have
\be
\| \chi_1 e^{itB} \chi_2 \|_{\Dom (B^k) \to  H} & = & \| \chi_1 e^{itB} \chi_2
(1 -iB)^{-k}\|_{H \to H} \\
& \leq & C \| \chi_1 e^{itB} 
(1 -iB)^{-k} \tchi_2 \|_{H \to H}.
\ee
Taking $k = 2$, $P(t) = t^{-3n/2 - \epsilon/2}$, and 
\be
F(t) = \left( \frac{t}{\log t} \right)^{\frac{2}{3n + \epsilon}}
\ee
yields \eqref{loc-en-est-101} for $s \geq k$.  We observe the spaces
$H^{1+s} \times H^s$ are complex interpolation spaces, hence
interpolating 
with the trivial estimate
\be
E_{\chi}(t) \leq \|u_0 \|_{H^1}^2 + \|u_1 \|_{L^2}^2,
\ee
yields \eqref{loc-en-est-101} for $s \geq 0$.

\qed

\begin{remark} 
Evidently, if we have polynomial resolvent bounds in a fixed strip
around the real axis, we have exponential local energy decay for the
wave equation with a loss in derivatives.  Further, if $H = L^2(X)$ for $X$ a compact manifold,
this theorem may be applied to the damped wave operator with $\chi_1 =
\chi_2 = 1$ to conclude there is exponential energy decay with loss in
derivatives for
solutions to the damped wave equation if there is a polynomial bound
on the inverse of the damped wave operator in a strip.  This corrects
a mistake in the proof of \cite[Theorem 5]{Ch}.
\end{remark}

We first need a lemma.
\begin{lemma}
\label{prop-lemma}
For $k > N+1$, the propagator satisfies the following identity on $H$:
\be
\frac{e^{it B(D_t)}}{(1-i B(-i))^k} = \frac{1}{2 \pi i} \int_{ \Im \xi
  = - \half} e^{it \xi} ( 1 - i \xi )^{-k} ( \xi - B( \xi))^{-1} d
\xi.
\ee
\end{lemma}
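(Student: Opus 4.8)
The plan is to read the asserted identity as the Bromwich (inverse--Laplace) representation of the Hille--Yosida semigroup, regularized by the factor $(1-iB(-i))^{-k}$ so that the contour integral converges. Write $I(t)$ for the right-hand side. First I would check that $I(t)$ is a well-defined bounded operator: on the line $\Im\xi=-\half$ the a priori bound $\|(\xi-B(\xi))^{-1}\|_{H\to H}\leq C|\Im\xi|^{-1}$ gives $\|(\xi-B(\xi))^{-1}\|\leq 2C$, while $|(1-i\xi)^{-k}|=\O(\lll\Re\xi\rrr^{-k})$, so the integrand is absolutely integrable once $k>N+1\geq1$ and $I(t)\in\mathcal{L}(H)$.

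Next I would show $I$ solves the homogeneous equation $(D_t-B(D_t))I(t)=0$ for $t>0$. Applying the operator under the integral and using the multiplier identities $D_te^{it\xi}=\xi e^{it\xi}$ and $B(D_t)e^{it\xi}=e^{it\xi}B(\xi)$ from the hypotheses (legitimate since the operator coefficient of $e^{it\xi}$ is independent of $t$), the factor $\xi-B(\xi)$ cancels the resolvent and leaves a purely scalar integrand:
\[
(D_t-B(D_t))I(t)=\left(\frac{1}{2\pi i}\int_{\Im\xi=-\half}e^{it\xi}(1-i\xi)^{-k}\,d\xi\right)\id .
\]
For $t>0$ I close this scalar contour in the upper half plane; the only singularity of $(1-i\xi)^{-k}$ is the order-$k$ pole at $\xi=-i$, which lies below the contour, so Jordan's lemma returns $0$ and $(D_t-B(D_t))I\equiv0$ there.

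It then remains to match initial data and invoke uniqueness. To compute $I(0)$ I close the contour into $\{\Im\xi<-\half\}$, where $\xi-B(\xi)$ is invertible by hypothesis and $(\xi-B(\xi))^{-1}$ is therefore holomorphic; the $|\xi|^{-k}$ decay of $(1-i\xi)^{-k}$ against the resolvent bound kills the large semicircle. The only enclosed singularity is again the order-$k$ pole at $\xi=-i$, and computing its residue (writing $1-i\xi=-i(\xi+i)$ and differentiating $k-1$ times) yields $I(0)=(1-iB(-i))^{-k}$. Since $I(t)$ and $e^{itB(D_t)}(1-iB(-i))^{-k}$ both solve $(D_t-B(D_t))U=0$ for $t>0$ with the same value at $t=0$, uniqueness of the Hille--Yosida evolution forces them to coincide, which is the claim.

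The main obstacle is the rigorous bookkeeping of the contour manipulations rather than any single hard estimate: justifying that the unbounded multiplier $B(D_t)$ may be moved through the integral, and that the contour may be closed with vanishing arcs, using the decay of $(1-i\xi)^{-k}$ against the $\O(1)$ resolvent bound on the line (this is where $k>N+1$ is needed). The one genuine computation is the residue at the order-$k$ pole; it is immediate when $B$ is independent of $\xi$, as in the wave-equation application.
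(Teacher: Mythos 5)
Your proof follows essentially the same route as the paper's: both sides are shown to solve $(D_t - B(D_t))w = 0$, and the initial value $I(0)$ is computed by pushing the contour into the lower half plane and evaluating the order-$k$ pole at $\xi = -i$, after which uniqueness for the Hille--Yosida evolution gives the identity. Your write-up in fact supplies slightly more detail than the paper (the scalar contour computation showing the source term vanishes for $t>0$, and the caveat about the $\xi$-dependence of $B$ in the residue computation), but the underlying argument is the same.
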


\begin{proof}
We write $I_k$ for the right hand side and observe both
the left hand side and $I_k$ satisfy the evolution equation
\be
(D_t - B(D_t)) w = 0.
\ee
To calculate $I_k(0)$, we deform the contour to see
\be
I_k(0) = \frac{1}{2 \pi i} \left( \int_{ \Im \xi = -C} -
  \int_{\partial B(-i, \epsilon)} \right) ( 1 - i \xi )^{-k} ( \xi - B( \xi))^{-1} d
\xi.
\ee
Letting $C \to \infty$, the first integral vanishes.  Thus we need to
calculate the second integral.  For $k=1$, this is the residue
formula, while for $k>1$ the formula follows by induction and the
continuity of $B( \xi)$ as $\epsilon \to 0$.

Thus the left hand side and $I_k$ have the same initial conditions,
and the lemma is proved.
\end{proof}

\begin{proof}[Proof of Theorem \ref{burq-thm}]
Now, as in \cite{Bur1a}, we introduce a cutoff in time to make the
equation inhomogeneous, then analyze the integral separately for low
and high frequencies in $\xi$.  In order to maintain smoothness, we
convolve with a Gaussian.  For an initial condition $u_0 \in H$, let
$V(t) = e^{itB(D_t)} \chi_2 u_0$, and consider $U(t) = \psi(t) (1
-iB(-i))^{-k} V(t)$
for $\psi(t) \in \Ci( \reals)$ satisfying $\psi \equiv 0$ for $t \leq
1/3$, $\psi \equiv 1$ for $t \geq 2/3$, and $\psi' \geq 0$.  We
observe by the sub-unitarity of $e^{itB(D_t)}$ for $t \geq 0$,
\be
\| U(t) \| \leq C \| V(t) \| \leq C' \| u_0\|,
\ee
where for the remainder of the proof, $\| \cdot \| = \| \cdot \|_H$
unless otherwise specified.

The family $U(t)$
satisfies
\be
(D_t - B(D_t))U = \tA(t) (1 -i B(-i))^{-k}V(t),
\ee
where $\tA$ is a bounded operator on $H$ with support contained in
$[1/3, 2/3]$.  As $U(0) = 0$, Duhamel's formula yields
\be
U(t) = \int_0^t e^{i(t-s) B(D_t)} \tA(s) (1 -i B(-i))^{-k}V(s) ds,
\ee
and by Lemma \ref{prop-lemma},
\be
U(t) = \int_{s=0}^t \int_{\Im \xi = -1/2} e^{i(t-s) \xi } \tA(s) (1 - i \xi
)^{-k} ( \xi - B( \xi))^{-1} V(s) d \xi d s.
\ee
For a function $F(t)>0$, monotone increasing in $t$ to be selected
later, we will cut off frequencies in $|\xi|$ above and below
$F(t)^2$.  We convolve with a Gaussian to smooth this out:
\be
U(t) & = & \int_{s=0}^t \int_{ \Im \xi = -1/2} \int_\lambda
e^{i(t-s) \xi }\tA(s) (1 - i \xi)^{-k} ( \xi - B( \xi ))^{-1} \\
&& \quad \quad \quad \quad \cdot (c_0 /
\pi)^\half e^{-c_0( \lambda - \xi / F(t) )^2} V(s) d \lambda d \xi ds \\
& = & \int_0^t \int_{ \Im \xi = -1/2} \left( \int_{| \lambda | \leq
    F(t)} + \int_{ |\lambda| \geq F(t) } \right) (\cdot) d \lambda d \xi
ds \\
& =: & I_1 + I_2.
\ee

{\bf Analysis of $I_1$:} From the resolvent and propagator continuation properties, the
integrand in $I_1$ is holomorphic in $\{ \Im \xi <0 \} \cup \Omega$.  Observe
if $| \Re \xi| \gg F(t)^2$, then the integrand is rapidly decaying,
hence we can deform
the contour in $\xi$ to 
\be
\Gamma = \left\{ \xi \in \cx :  \Im \xi  =
  \left\{ \begin{array}{ll} C, & | \Re \xi | \leq C \\
      P(| \Re \xi | ), & | \Re \xi | \geq C. \end{array}
  \right. \right \}
\ee

We further break $I_1$ into integrals where $\Re \xi$ is larger than
or smaller than $F^2(t)$:
\be
I_1 & = & \int_0^t \left( \int_{ \Gamma \cap \{ | \Re \xi | \leq A
    F(t)^2 \}} + \int_{ \Gamma \cap \{ | \Re \xi | \geq A
    F(t)^2 \}} \right)
\int_{| \lambda | \leq
    F(t)} (\cdot) d \lambda d \xi
ds \\ & =:& J_1 + J_2.
\ee

For $J_1$, if $t \geq 2$, since $P( | \Re \xi |)$ is monotone decreasing, we have
\be
\Im \xi \geq P(A F(t)^2),
\ee
and on the support of $\tA$, we have $t-s \geq t-1$.  Hence
\be
\left\| \chi_1 J_1 \right\| & \leq & C  \int_{ \Gamma \cap \{ | \Re \xi | \leq A
    F(t)^2 \}} \int_{| \lambda | \leq
    F(t)} e^{-(t-1) P(A F(t)^2)} \lll \xi \rrr^{-k} G( | \Re \xi | ) \\
&& \quad \quad \quad \quad 
    \cdot \left| e^{-c_0( \lambda - \xi / F(t) )^2} \right| d \lambda d
    \xi \| u_0 \| \\
& \leq & C A F(t)^2 e^{-t P( A F(t)^2)} \| u_0 \|.
\ee

For $J_2$, we observe that for $A$ large enough and $| \Re \xi | \geq
A F(t)^2$, 
\be
\Re ( \lambda - \xi / F(t) )^2 \geq C^{-1} ( \lambda^2 + ( \Re \xi
)^2/ F(t)^2).
\ee
Hence,
\be
\left\| \chi_1 J_2 \right\| & \leq & C  \int_{ \Gamma \cap \{ | \Re \xi | \geq A
    F(t)^2 \}} \int_{| \lambda | \leq
    F(t)} \lll \xi \rrr^{-k} G( | \Re \xi | ) \\ 
&& \quad \quad \quad \quad
    \left| e^{-c_0( \lambda - \xi / F(t) )^2} \right| d \lambda d
    \xi \| u_0 \| \\
& \leq & C \int_{ | \eta | \geq F(t) } F(t) e^{-c_1 \eta^2} d \eta \|
u_0 \| \\
& \leq & C  F(t) e^{-c_2 F(t) } \| u_0 \|.
\ee

{\bf Analysis of $I_2$:} Set
\be
J(\tau) & = & \int_{s=0}^1 \int_{{\Im \xi = -1/2}\atop{|\lambda| \geq F(t)}}
\tA(s) e^{i(\tau-s) \xi} (1 -i \xi )^{-k}( \xi - B( \xi ))^{-1} \\
&& \quad \quad \quad \quad \cdot (c_0 /
\pi)^\half e^{-c_0( \lambda - \xi / F(t) )^2} V(s) d \lambda d \xi
ds,
\ee
which for $\tau \geq 1$ is equal to $U(\tau)$.  Observe 
\be
(D_\tau - B(D_\tau))J( \tau )  &=&  \int_{s=0}^1 \int_{{\Im \xi = -1/2}\atop{|\lambda| \geq F(t)}}
\tA(s) e^{i(\tau-s) \xi} (1 -i \xi )^{-k} \\
&& \quad \quad \quad \quad \cdot (c_0 /
\pi)^\half e^{-c_0( \lambda - \xi / F(t) )^2} V(s) d \lambda d \xi
ds \\
& =:& K(\tau).
\ee
Hence
\be
J( t ) = e^{i t B( D_t )} J(0) + \int_0^t e^{i (t-s) B(
  D_s )} K(s) ds.
\ee
Again, by the subunitarity of the propagator, we need to estimate
$\|J(0) \|$ and $\int_0^t \| K(s) \| ds$.  For $s \in [1,t]$, since $k
>N+1$, we can deform the $\xi$-contour in the definition of $K$ to
$\Im \xi = F(t)$.  Then for this range of $s$,
\be
\|K(s)\|  \leq  C \int_{\eta} e^{-(s-2/3) F(t)} \lll \eta \rrr^{-k} d
\eta \|u_0 \|,
\ee
and hence
\be
\int_1^t \| K(s) \| ds \leq C F(t)^{-1} e^{-F(t)/3}.
\ee

For $J(0)$, we first consider $\lambda \geq F(t)$.  Since $k > N+1$, we can deform the $\xi$-contour to 
\be
\Gamma'  =  \Gamma_- \cup \Gamma_+ 
\ee
where
\be
\Gamma_- & = & \{ \Re \xi \leq F(t)^2/A, \,\, \Im \xi = -1/2 \} \\
&& \quad  \cup \{ \Re \xi =
F(t)^2/A, \,\, -F(t) \leq \Im \leq -1/2 \} 
\ee
and
\be
\Gamma_+ =  \{ \Re \xi \geq
F(t)^2/A, \,\, \Im \xi = -F(t) \} .
\ee
If $\xi \in \Gamma_-$, we have
\be
\Re ( \lambda - \xi / F(t) )^2 \geq  \lambda^2 /C,
\ee
so
\be
\int_{\xi \in \Gamma_-} \int_{\lambda \geq F(t) } \lll \xi \rrr^{-k}
G( | \Re \xi |) \cdot  e^{-c_0( \lambda - \xi / F(t) )^2} V(s) d \lambda d \xi
\leq C e^{-F(t)^2}.
\ee
For $\xi \in \Gamma_+$, we have
\be
| e^{-is \xi }| = e^{-F(t)/3},
\ee
so the contribution to $\|J(0) \|$ coming from $\lambda \geq F(t)$ is
bounded by
\be
C (e^{-F(t)^2} + e^{-F(t)/3} ).
\ee
The contribution to $\| J(0) \|$ coming from $\lambda \leq - F(t)$ is
handled similarly to obtain the same bound.  

We have yet to estimate $\int_0^1 \| K(s) \| ds$.  For this we use
Plancherel's formula to write
\ben
\left( \int_0^1 \| K(s) \|ds \right)^2 & \leq &
\int_{-\infty}^{\infty} \|K(s)\|^2 ds \nonumber \\ & = & \int_{-\infty}^{\infty}
\left\| (1 -i \xi
  )^{-k} \widehat{\tA V}(\xi) \int_{| \lambda | \geq F(t)}
    e^{-c_0(\lambda  - \xi/F(t) )^2} d \lambda \right\|^2 d \xi . \label{eqn-1010}
\een
If we estimate this integral by again considering regions where $| \xi
| \leq F(t)^2/A$ and $| \xi | \geq F(t)^2/A$ respectively, we see \eqref{eqn-1010} is majorized by
\be
\lefteqn{ C( F(t)^{-2k} + e^{-F(t)^2/C}) \int_{-\infty}^{\infty} \left\|
  \widehat{ \tA V} ( \xi ) \right\|^2 d \xi } \\
 & = &
C( F(t)^{-2k} + e^{-F(t)^2/C}) \int_{-\infty}^\infty \|\tA V(s) \|^2 d s \\
& \leq & C( F(t)^{-2k} + e^{-F(t)^2/C}) \|u_0 \|^2.
\ee

Combining all of the above estimates, we have
\be
\|U(t) \| \leq C \max \left\{ \begin{array}{l} F(t)^{-2k} \\
    e^{-F(t)/3} + e^{-F(t)^2/C}, \\
    F(t)^{-1} e^{-F(t)/3}, \\
    F(t)^2 e^{-tP(F(t)^2)} + F(t) e^{F(t)} \end{array} \right\} \|
u_0\|.
\ee
Relabelling $F(t)^2$ as $F(t)$ throughout and applying the condition
\eqref{F-cond}, we recover \eqref{abs-decay-est}.

\end{proof}


\end{document}